\theoremstyle{plain}
\newtheorem{theorem}{Theorem}[section]
\newtheorem{lemma}[theorem]{Lemma}
\theoremstyle{definition}
\newtheorem{definition}[theorem]{Definition}
\newtheorem{remark}[theorem]{Remark}
\theoremstyle{definition}
\def\fnum{equation}
\numberwithin{equation}{section}
\begin{document}
\title[A curvature gap for  minimal surfaces in the ball]
{A remark on a curvature gap for minimal surfaces in the  ball}

\author{Ezequiel Barbosa}
\address{Universidade Federal de Minas Gerais (UFMG), Caixa Postal 702, 30123-970, Belo Horizonte, MG, Brazil}
\email{ezequiel@mat.ufmg.br}
\author{Celso Viana}
\address{Department of Mathematics, University College London, Gower Street,
	London WC1E 6BT, United Kingdom} 
\email{celso.viana.14@ucl.ac.uk}

\begin{abstract}
 We extend to   higher codimension  earlier characterization of the  equatorial disk and  the critical  catenoid    by a pinching condition on the length of their second fundamental form among free boundary minimal surfaces in the  three dimensional Euclidean ball due to L. Ambrozio and I. Nunes.
\end{abstract}

\maketitle

\section{Introduction}\label{intro}

In this note we consider $2$-dimensional free boundary minimal surfaces in the Euclidean ball $B^n$. The free boundary condition implies that these   minimal surfaces  meet  the boundary of the ball  orthogonally. Such surfaces arise as  critical points of the area functional for relative cycles in the ball and  as extremals for the Steklov  problem \cite{FS1,FS2}.  The simplest free boundary minimal surfaces in the ball are  the equatorial disk and the critical catenoid.
Recently, Ambrozio and Nunes  \cite{AN} proved a characterization of the equatorial  disk and the critical catenoid in the $3$-dimensional ball  by a pinching condition involving the length of the second fundamental form and the support function:

\begin{theorem}[Ambrozio-Nunes]\label{an}
	\textit{Let $\Sigma^2$ be a compact free boundary minimal surface in $B^3$. Assume that for all points $x \in \Sigma$,
		\begin{eqnarray}\label{gap}
		|x^{\perp}|^2\, |A(x)|^2\leq 2
		\end{eqnarray}
		where $x^{\perp}$ denotes the normal component of the point $x\in \Sigma$ and $A$
		denotes the second fundamental form of $\Sigma$. Then
		\begin{enumerate}
			\item $|x^{\perp}|^2\,|A(x)|^2\equiv 0$  and $\Sigma$ is an  equatorial disk; 
			\item $|x_0^{\perp}|^2\,|A(x_0)|^2=2$ at some point $x_0\in \Sigma$ and $\Sigma$ is a critical catenoid.
	\end{enumerate}}
\end{theorem}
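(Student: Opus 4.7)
My plan is to reduce to the nontrivial case $|A| \not\equiv 0$ and then study the function $\phi := u^2 |A|^2$, where $u := \langle x, N\rangle$ is the support function. In codimension one $|x^\perp|^2 = u^2$, and the free boundary condition forces $x \in T_x\Sigma$ on $\partial\Sigma$, hence $u \equiv 0$ on $\partial\Sigma$; thus $\phi$ vanishes on $\partial\Sigma$ and $0 \leq \phi \leq 2$ by hypothesis. If $|A| \equiv 0$ then $\Sigma$ is totally geodesic, and the free boundary condition in $B^3$ forces the resulting planar disk to pass through the origin, giving conclusion (1).

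The analytic core comes from two identities on a $2$-dimensional minimal surface in $\mathbb{R}^3$. Holomorphicity of the Hopf differential, together with the Gauss equation $K = -|A|^2/2$, yields away from $\{|A|=0\}$ the sharp identity
\[
\Delta \log |A|^2 = -2|A|^2,
\]
while harmonicity of the position vector gives the Jacobi equation $\Delta u + |A|^2 u = 0$ for the support function. Adding these,
\[
\Delta \log \phi = -4|A|^2 - 2\,\frac{|\nabla u|^2}{u^2} \leq 0
\]
on $\{\phi > 0\}$, so $\log\phi$ is strictly superharmonic with $\log\phi \leq \log 2$ and $\log\phi \to -\infty$ along $\partial\Sigma$.

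I plan to couple this PDE with a Reilly-type integral identity,
\[
3\int_\Sigma |A|^2 u^2 = \int_\Sigma |A|^2 |x|^2,
\]
obtained by multiplying $\Delta u = -|A|^2 u$ by $u$, integrating by parts (the boundary term vanishes since $u|_{\partial\Sigma}=0$), and invoking the $2$-dimensional minimal-surface identity $|\nabla u|^2 = \tfrac{1}{2}|A|^2 |x^T|^2$. Since $|x|^2 \leq 1$ with strict inequality on the interior of $\Sigma$, combining this identity with the pinching constrains the geometry: strict pinching $\phi < 2$ everywhere should force $|A| \equiv 0$ (returning to case (1)), whereas the saturation $\phi(x_0) = 2$ must occur otherwise, placing us in case (2).

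The main obstacle is the rigidity in case (2), because on the critical catenoid $\phi$ attains its maximum only along a single latitude circle, so the strong maximum principle alone cannot conclude $\phi \equiv 2$. My approach is to pass to the conformally rescaled metric $\tilde g := |A|^2 g_\Sigma$ on $\{|A|>0\}$; the identities above imply that $(\Sigma, \tilde g)$ has \emph{constant} Gauss curvature $\tilde K = \tfrac{1}{2}$ (the classical fact that the Gauss map of a minimal surface in $\mathbb{R}^3$ is conformal), and $u$ satisfies $\tilde\Delta u + u = 0$ with $u|_{\partial\Sigma}=0$, i.e., it is a first Dirichlet eigenfunction in the rescaled metric. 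The saturation $u^2|A|^2 = 2$ at $x_0$ then produces an overdetermined condition that, combined with the spherical rigidity of $(\Sigma, \tilde g)$, should pin down $\Sigma$ up to rigid motion as the critical catenoid.
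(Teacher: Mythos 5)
Your approach is genuinely different from the paper's, and several of your preliminary identities are correct: on a minimal surface in $\mathbb{R}^3$ one indeed has $\Delta_\Sigma\log|A|^2 = 4K = -2|A|^2$ away from umbilic points, $\Delta_\Sigma u + |A|^2 u = 0$, $|\nabla u|^2 = \tfrac12|A|^2|x^\top|^2$, and the Reilly-type identity $3\int_\Sigma|A|^2u^2 = \int_\Sigma|A|^2|x|^2$. The paper takes a very different route: it shows the pinching forces $\mathrm{Hess}_\Sigma(|x|^2/2)\ge 0$, uses total convexity of the minimum set of $|x|^2$ to reduce the topology of $\Sigma$ to a disk or an annulus, quotes Fraser--Schoen's uniqueness theorem in the disk case, and in the annulus case shows the minimum set is a geodesic circle on which the pinching is saturated, then constructs a critical catenoid tangent to $\Sigma$ along that circle and closes with a unique-continuation/nodal-set argument. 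That route has the advantage of generalizing to higher codimension, which is the paper's actual theorem; your identities for $\log|A|^2$, the shape operator, and the Gauss map all hinge on codimension one.

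However, as written your proposal has genuine gaps that prevent it from closing. First, the assertion that strict pinching $\phi < 2$ everywhere ``should force'' $|A|\equiv 0$ is not established: the Reilly identity combined with $|x|\le 1$ and $\phi\le 2$ yields only inequalities that do not contradict one another without further structural input, and you have not supplied that input. Second, the superharmonicity of $\log\phi$ with $\log\phi\to -\infty$ at $\partial\Sigma$ (and also at interior zeros of $u$ or of $|A|$, which you do not rule out) does not produce a contradiction by itself; superharmonic functions obey a \emph{minimum} principle, which is perfectly consistent with $\log\phi$ blowing down to $-\infty$ at the boundary and at umbilics while remaining finite elsewhere. Third, the passage to $\tilde g = |A|^2 g$ and the observation that $u$ is an eigenfunction of $\tilde\Delta$ with eigenvalue $1$ on a surface of constant curvature $1/2$ is a nice reformulation, but ``an overdetermined condition at $x_0$ that should pin down $\Sigma$'' is not an argument: you need to explain what the overdetermined condition is, why it forces $\tilde g$ to be a specific rotationally symmetric metric, and how to rule out other free boundary annuli. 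Finally, your proposal never rules out the possibility that $\Sigma$ has genus $\ge 1$ or many boundary components; the paper's Hessian/convexity step is precisely what handles this topological reduction, and you have no substitute for it. The ideas are promising and could plausibly be developed into an alternative codimension-one proof, but as stated the rigidity steps in both cases are missing.
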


Theorem \ref{an}  share  similarities with  a classical theorem
of Chern, do Carmo, and Kobayashi  \cite{CCK} (see also Lawson \cite{L}) which characterizes  the equatorial spheres and  the Clifford hypersurfaces in $\mathbb{S}^{n+1}$ and the Veronese surface in $\mathbb{S}^4$    as the only  minimal submanifolds of dimension $n$  in  $\mathbb{S}^{n+p}$ satisfying the inequality $|A|^2\leq n\bigg/\bigg(2- \frac{1}{p}\bigg)$.
Despite the analogy, the proof of Theorem \ref{an} given in \cite{AN} is quite different. Besides working  in  dimension two, the codimension one is crucially used in some steps of the proof. The authors in \cite{AN} ask the question weather  Theorem \ref{an} can be generalized to  higher ambient dimension and submanifold co-dimension. 
In this note we answer their question positively in the special case of $2$-dimensional surfaces in the  ball of any dimension, see Theorem \ref{our result} below.

Our proof   follows closely the arguments in \cite{AN} and it is  based on three  ingredients: Fraser and Schoen's Uniqueness  Theorem for free boundary minimal disks, a standard dimension reduction argument, and an analysis of nodal sets for  solutions of an elliptic system of    partial differential equations.

We remark that Theorem \ref{an} was    recently generalized to  geodesic balls in the $3$-dimensional hyperbolic space and the hemisphere in \cite{HS}. Our proof   also applies  to these settings and their result can   be extended in a similar way as discussed here.   Finally, we mention that  the pinching condition (\ref{gap})  also characterizes the  plane and the catenoid among properly embedded  minimal surfaces without boundary  in $\mathbb{R}^3$ (see Remark \ref{remark} below). 
A  version of this result  was  first proved by   Meeks, P\'{e}rez, and Ros in \cite[Section 7]{MPR}.

\section{Preliminaries}

The next two lemmas are standard, for the benefit of the reader we include their proofs.

\begin{lemma} \label{mean curvature equation}
	\textit{Let $\Sigma^k$ be a minimal submanifold in  $\mathbb{R}^{n}$ given by the graph of the function $u:U\subset \mathbb{R}^k\rightarrow \mathbb{R}^{n-k}$ where
		$u(x)=(u_1(x),\ldots,u_{n-k}(x))$. Then for every $l=1,\ldots,n-k$
		\begin{eqnarray}\label{mean curvature}
		\frac{a_{ij}(\nabla u_1,\ldots,\nabla u_{n-k})}{\sqrt{1+|\nabla u_l|^2}}\,D_{ij} u_l=0,
		\end{eqnarray}
		for some smooth functions $a_{ij}(\nabla u_1,\ldots,\nabla u_{n-k})$.}
\end{lemma}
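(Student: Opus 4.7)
The plan is to parametrize $\Sigma^k$ as a graph in the obvious way and read off the minimal surface system from the vanishing of the mean curvature vector; in the graph frame the minimality condition collapses to a clean non-divergence form with no first-order terms, which is exactly what the statement requires.

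First I would set up the graph parametrization $F\colon U\to\mathbb{R}^n$, $F(x)=(x,u_1(x),\ldots,u_{n-k}(x))$, with coordinate tangent frame $E_i=\partial_iF$ for $i=1,\ldots,k$. A direct computation gives the induced metric
\[
g_{ij}=\langle E_i,E_j\rangle=\delta_{ij}+\sum_{m=1}^{n-k}\partial_iu_m\,\partial_ju_m,
\]
and since $g=I+J^{T}J$ (with $J$ the Jacobian of $u$) is positive definite, both $g_{ij}$ and its inverse $g^{ij}$ are smooth functions of $\nabla u_1,\ldots,\nabla u_{n-k}$.

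Because the ambient connection on $\mathbb{R}^n$ is flat, $\partial_iE_j=\partial_{ij}F=(0,\ldots,0,D_{ij}u_1,\ldots,D_{ij}u_{n-k})$ is a vector whose first $k$ coordinates vanish. The mean curvature vector is $\vec H=(g^{ij}\,\partial_{ij}F)^{\perp}$, so $\vec H=0$ is equivalent to $g^{ij}\,\partial_{ij}F$ being tangent to $\Sigma$. But any tangent vector $\sum_ia^iE_i$ has first $k$ components $(a^1,\ldots,a^k)$, so a tangent vector whose first $k$ components vanish must itself be zero. Hence minimality forces $(0,g^{ij}D_{ij}u_1,\ldots,g^{ij}D_{ij}u_{n-k})=0$, i.e.,
\[
g^{ij}D_{ij}u_l=0\qquad\text{for each }l=1,\ldots,n-k.
\]

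To finish, I would set $a_{ij}:=\sqrt{1+|\nabla u_l|^2}\,g^{ij}$, which is smooth in $\nabla u_1,\ldots,\nabla u_{n-k}$ by the first step; then the system above is precisely \eqref{mean curvature}. I do not anticipate a real obstacle here—the result is essentially bookkeeping. The only conceptual point worth flagging is that the graph parametrization, via the observation about tangent vectors with vanishing base components, is what kills all first-order terms and reduces $\vec H=0$ to the quasi-linear form $g^{ij}D_{ij}u_l=0$.
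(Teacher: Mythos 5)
Your proof is correct, and it takes a genuinely different path from the paper's. Both arguments reach the quasi-linear system $g^{ij}D_{ij}u_l=0$, but the paper constructs, for each $l$, an explicit unit normal field
\[
N_l=\frac{1}{\sqrt{1+|\nabla u_l|^2}}\bigl(-D_{x_1}u_l,\ldots,-D_{x_k}u_l,0,\ldots,1,\ldots,0\bigr),
\]
differentiates it, identifies the scalar second fundamental form $(A_{N_l})_{ij}=\frac{1}{\sqrt{1+|\nabla u_l|^2}}D_{ij}u_l$, and traces. You bypass this entirely: you compute $\partial_{ij}F$ directly, note that its first $k$ coordinates vanish, and observe that in graph coordinates a tangent vector whose first $k$ coordinates vanish must be the zero vector; since $\vec H=0$ forces $g^{ij}\partial_{ij}F$ to be tangent, the system drops out with no need to write down or differentiate a normal frame. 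This is cleaner conceptually and avoids a small computational lemma, at the cost of making less explicit the scalar second fundamental forms that the paper is implicitly exploiting. One minor point worth correcting: you set $a_{ij}:=\sqrt{1+|\nabla u_l|^2}\,g^{ij}$, which makes $a_{ij}$ depend on $l$; but the statement (and its use in Lemma~\ref{difference equation}) clearly intends a single family $a_{ij}=g^{ij}$, with the factor $1/\sqrt{1+|\nabla u_l|^2}$ recorded separately. Just take $a_{ij}=g^{ij}$ and observe that multiplying $g^{ij}D_{ij}u_l=0$ by the positive factor $1/\sqrt{1+|\nabla u_l|^2}$ gives~\eqref{mean curvature}.
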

\begin{proof}
	Parametrize $\Sigma$  as $\varphi(x)=(x,u_1(x),\ldots,u_{n-k}(x))$. The coordinate basis for $\Sigma$ is given by \[D_{x_i}\varphi= (0,\ldots,1,\ldots, D_{x_i}u_1,\ldots, D_{x_i}u_{n-k}),\]
	for $i\,=\,1,\ldots,k$.  It follows that
	\begin{equation}\label{metrica inversa}
	g_{ij}=\delta_{ij}+ \sum_{l=1}^{n-k}D_{x_i}u_l\,D_{x_j}u_l\quad \text{and}\quad g^{ij}=a_{ij}(\nabla u_1,\ldots,\nabla u_{n-k}).
	\end{equation}
	Now we consider for each $l\,=\, 1,\ldots, n-k$ the unit normal vector 
	\[N_l=\frac{1}{\sqrt{1 + |\nabla\,u_l|^2}}(-D_{x_1}u_l,\ldots, -D_{x_k}u_l,0,\ldots, 1,\ldots, 0).\]
	A simple computation gives
	\begin{eqnarray*}(N_l)_{x_i}&=&\bigg(\frac{1}{\sqrt{1 + |\nabla\,u_l|^2}}\bigg)_{x_i}\sqrt{1+ |\nabla u_l|^2}\,N_l+ \\
		&&\frac{1}{\sqrt{1 + |\nabla\,u_l|^2}} (-D_{x_1x_l}^2u_l,\ldots, -D_{x_k x_l}^2u_l,0\ldots,0).
	\end{eqnarray*}
	Consequently,
	\[(A_{N_l})_{ij}=\langle -d N_l(\varphi_{x_i}),\varphi_{x_j}\rangle=\frac{1}{\sqrt{1 + |\nabla\,u_l|^2}}D_{x_i x_j}^2u_l. \]
	Since  $\Sigma^k$ is minimal, $0=g^{ij}(A_{N_l})_{ij}$ and by (\ref{metrica inversa})  we obtain
	\[\frac{a_{ij}(\nabla u_1,\ldots,\nabla u_{n-k})}{\sqrt{1 + |\nabla\,u_l|^2}} D_{ij} u_l\,=\,0.\]
\end{proof}
\begin{lemma}\label{difference equation}
	\textit{If $u,v: U\subset \mathbb{R}^k\rightarrow \mathbb{R}^p$ are smooth maps  satisfying (\ref{mean curvature}), then the difference $\varphi=u-v$ satisfies, for each $l=1,\ldots,p$, the  equation
		\begin{eqnarray*}
			\frac{a_{ij}(\nabla u)}{\sqrt{1+ |\nabla u_l|^2}}D_{ij}(\varphi_l) + \sum_{m=1}^{p}b_j^m(\nabla u,\nabla v) D_{j} (\varphi_m)=0,
		\end{eqnarray*}
	 for some smooth functions $a_{ij}(\nabla u)$ and $b_j^m(\nabla u,\nabla v)$.}
\end{lemma}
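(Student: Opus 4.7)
The plan is to subtract the two copies of equation (\ref{mean curvature}) --- one for $u_l$ and one for $v_l$ --- and split the difference into a piece of second order in $\varphi_l$ carrying the prescribed leading coefficient, plus a piece of first order in $\varphi_1,\ldots,\varphi_p$ obtained by freezing one argument and applying a mean-value argument on the other.

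To carry this out I would first abbreviate the coefficient by $c^l_{ij}(\nabla w):=a_{ij}(\nabla w_1,\ldots,\nabla w_p)/\sqrt{1+|\nabla w_l|^2}$, a smooth function of the full gradient datum $\nabla w\in\mathbb{R}^{kp}$. By Lemma \ref{mean curvature equation} (with summation convention in $i,j$),
\[
c^l_{ij}(\nabla u)\,D_{ij}u_l=0\qquad\text{and}\qquad c^l_{ij}(\nabla v)\,D_{ij}v_l=0.
\]
Subtracting and inserting the intermediate term $c^l_{ij}(\nabla u)\,D_{ij}v_l$ gives
\[
0\;=\;c^l_{ij}(\nabla u)\,D_{ij}\varphi_l\;+\;\bigl[c^l_{ij}(\nabla u)-c^l_{ij}(\nabla v)\bigr]\,D_{ij}v_l,
\]
and the first summand is exactly $\dfrac{a_{ij}(\nabla u)}{\sqrt{1+|\nabla u_l|^2}}D_{ij}\varphi_l$, which is the leading term asked for in the statement.

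For the remaining bracket I would apply the fundamental theorem of calculus along the straight-line path from $\nabla v$ to $\nabla u$ in parameter space:
\[
c^l_{ij}(\nabla u)-c^l_{ij}(\nabla v)\;=\;\sum_{m=1}^{p}\sum_{s=1}^{k}\left(\int_0^1 \partial_{p^s_m}c^l_{ij}\bigl(\nabla v+t\nabla\varphi\bigr)\,dt\right)D_s\varphi_m.
\]
Multiplying by $D_{ij}v_l$ and summing in $i,j$ collects an expression of the form $\sum_{m}b^m_j(\cdot)\,D_j\varphi_m$, with the coefficients smooth in $\nabla u,\nabla v$ (and, strictly speaking, also in $D^2 v_l$). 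Combining with the previous step yields the claimed linear system for $\varphi$.

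The work is essentially bookkeeping rather than genuinely difficult; the one subtlety to flag is the tacit additional dependence of $b^m_j$ on $D^2 v_l$, which the statement suppresses. This is harmless for the intended use, where all that matters is that $\varphi=u-v$ solves a \emph{linear} elliptic system with smooth coefficients --- precisely the structural input required for the nodal-set analysis alluded to in the introduction.
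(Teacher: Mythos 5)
Your proof is correct and follows essentially the same route as the paper: subtract the two equations, insert the intermediate term $\tfrac{a_{ij}(\nabla u)}{\sqrt{1+|\nabla u_l|^2}}D_{ij}v_l$, and apply the fundamental theorem of calculus along the segment joining $\nabla v$ to $\nabla u$ to convert the difference of coefficients into a first-order term in $\varphi$ (your $c^l_{ij}$ is the paper's $F_{ij}$). Your remark that the coefficients $b^m_j$ also depend on $D^2v_l$ is accurate and a point the paper's statement leaves implicit, but as you note it is immaterial for the application.
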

\begin{proof}
	As $u_l$ and $v_l$ satisfy equation (\ref{mean curvature}), therefore
	\begin{eqnarray*}
		0=\frac{a_{ij}(\nabla u)}{\sqrt{1+ |\nabla u_l|^2}}D_{ij}u_l &-& \frac{a_{ij}(\nabla v)}{\sqrt{1+ |\nabla v_l|^2}}D_{ij}v_l  \\
		= \frac{a_{ij}(\nabla u)}{\sqrt{1+ |\nabla u_l|^2}}D_{ij}u_l &-&  \frac{a_{ij}(\nabla u)}{\sqrt{1+ |\nabla u_l|^2}}D_{ij}v_l \\ 
		&+& \frac{a_{ij}(\nabla u)}{\sqrt{1+ |\nabla u_l|^2}}D_{ij}v_l - \frac{a_{ij}(\nabla v)}{\sqrt{1+ |\nabla v_l|^2}}D_{ij}v_l \\
		=\frac{a_{ij}(\nabla u_l)}{\sqrt{1+ |\nabla u_l|^2}}D_{ij}(\varphi_l) &+& \bigg(\frac{a_{ij}(\nabla u)}{\sqrt{1+ |\nabla u_l|^2}} - \frac{a_{ij}(\nabla v)}{\sqrt{1+ |\nabla v_l|^2}}\bigg)D_{ij}v_l.
	\end{eqnarray*}
	Now, let $F_{ij}: \mathbb{R}^k\times\cdots\times \mathbb{R}^k\rightarrow \mathbb{R}$ be the function  defined by \[F_{ij}(z_1,\ldots,z_{p})= \frac{a_{ij}(z_1,\ldots,z_{p})}{\sqrt{1+ |z_l|^2}}.\] By the Fundamental Theorem of Calculus we can write
	\begin{eqnarray*}
		F_{ij}(\nabla u)- F_{ij}(\nabla v)=\bigg(\int_{0}^{1}dF_{ij}(\nabla u + t(\nabla v - \nabla u))dt\bigg)\nabla (u-v).
	\end{eqnarray*}
	The lemma follows by setting $b_q^m$ to be
	\begin{eqnarray*}
		b_q^m=\bigg(\int_{0}^{1}dF_{ij}(\nabla u + t(\nabla v - \nabla u))dt\bigg)_{qm}\, D_{ij}v_l\,D_q (u-v)_m.
	\end{eqnarray*}
\end{proof}

The next lemma, which  is essentially contained in   \cite{HS}, concerns nodal sets for solutions of  elliptic   equations. We add the proof in order to include  solutions of   elliptic  system of   equations.
\begin{lemma}[Hardt-Simon \cite{HS}]\label{size nodal set}
	\textit{Let $u: U\subset \mathbb{R}^n\rightarrow \mathbb{R}^p$ be a smooth map which satisfies for each $k=1,\ldots,p$ an elliptic equation of the form:
		\begin{eqnarray}\label{pde}
		a_{ij}(x)D_{ij} u_k + \sum_{l=1}^{p}b_j^l(x) D_j u_l+ \sum_{l=1}^{p}c_l(x)\,u_l=0,
		\end{eqnarray}
		where $a_{ij}$, $b_j$, and  $c_l$ are smooth functions. Let's assume that $a_{ij}$ is positive definite, and that $|b_j|\leq C$ and $|c_l|\leq C$ for some constant $C>0$. If the order of vanishing of $u_l$ at $u_l^{-1}(0)$ is finite for each $l$ and if $x_0 \in u^{-1}(0)\cap |Du|^{-1}(0)$, then 
		\[ u^{-1}(0)\cap |Du|^{-1}(0)\cap B_r(x_0)\] decomposes into a countable union of subsets of a pairwise disjoint collection of $n-2$-dimensional smooth submanifolds.}
\end{lemma}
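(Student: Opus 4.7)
The plan is to reduce to the scalar case of \cite{HS} by a uniform blow-up at $x_0$, using the observation that the couplings in the system become negligible in the blow-up limit. Fix $x_0 \in u^{-1}(0)\cap |Du|^{-1}(0)$ and, for each $l$, let $d_l\geq 2$ be the (finite, by hypothesis) order of vanishing of $u_l$ at $x_0$. Set $d=\min_l d_l$ and pick an index $l^\ast$ attaining the minimum; the order-$d$ Taylor polynomial $P$ of $u_{l^\ast}$ at $x_0$ is then a non-trivial homogeneous polynomial of degree $d$.

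Next I would rescale uniformly by setting $\tilde u_m^r(y):=u_m(x_0+ry)/r^d$ for every $m$. Substituting into (\ref{pde}) with $k=l^\ast$ and dividing by $r^{d-2}$ gives
\begin{equation*}
a_{ij}(x_0+ry)\,D_{ij}\tilde u_{l^\ast}^r + r\sum_m b_j^m(x_0+ry)\,D_j\tilde u_m^r + r^2\sum_m c_m(x_0+ry)\,\tilde u_m^r=0.
\end{equation*}
By interior Schauder estimates together with the boundedness of $b$ and $c$, the functions $\tilde u_m^r$ converge in $C^\infty_{\mathrm{loc}}$ as $r\to 0$ (to $P$ for $m=l^\ast$ and to $0$ otherwise), so the limit $P$ satisfies the constant-coefficient equation $a_{ij}(x_0)\,D_{ij}P=0$. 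After a linear change of coordinates diagonalizing $a_{ij}(x_0)$, $P$ becomes a non-constant harmonic polynomial of degree $d\geq 2$ on $\mathbb{R}^n$.

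The nodal set of such a $P$ is an $(n-1)$-dimensional real algebraic variety, and its singular stratum $\{P=0\}\cap\{DP=0\}$ is a real algebraic set of dimension at most $n-2$. Because $u^{-1}(0)\cap|Du|^{-1}(0)\subseteq u_{l^\ast}^{-1}(0)\cap |Du_{l^\ast}|^{-1}(0)$, the conclusion will follow once we apply the scalar Hardt--Simon argument to $u_{l^\ast}$ alone: the coupling terms $\sum_m b_j^m D_j u_m$ and $\sum_m c_m u_m$ act as lower-order perturbations that do not affect the scaling above, so the Federer-type dimension reduction carried out in \cite{HS} produces a countable $(n-2)$-rectifiable cover of $u_{l^\ast}^{-1}(0)\cap|Du_{l^\ast}|^{-1}(0)\cap B_r(x_0)$ by pairwise disjoint smooth submanifolds, and hence of the smaller set $u^{-1}(0)\cap|Du|^{-1}(0)\cap B_r(x_0)$.

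The main obstacle is making this reduction precise at every stage of the stratification, not only at $x_0$. One must iterate the blow-up along the singular stratum and verify at each such point that the tangent map of $u_{l^\ast}$ (with a possibly different choice of distinguished index) is again a non-trivial harmonic polynomial. The uniform rescaling computation above works at any point where some component vanishes to finite order; since this is guaranteed by the finiteness hypothesis on each $u_l$, the iteration proceeds, and the scalar argument of \cite{HS} transfers to the system setting.
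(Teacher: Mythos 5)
Your blow-up computation at $x_0$ is correct as far as it goes, but the proposal has a genuine gap at exactly the step that matters. You compute that the degree-$d$ tangent map $P$ of a lowest-order component $u_{l^\ast}$ satisfies $a_{ij}(x_0)D_{ij}P=0$, so its singular stratum $\{P=0\}\cap\{DP=0\}$ has dimension at most $n-2$. But this is a statement about a single tangent map, not about the set $u^{-1}(0)\cap|Du|^{-1}(0)$ near $x_0$. Passing from tangent-map information to a stratification of the actual singular set is precisely the hard content of the Federer/Hardt--Simon dimension reduction, and you do not carry it out: the closing sentences assert that ``the scalar argument of \cite{HS} transfers to the system setting'' and that the couplings ``do not affect the scaling,'' which is the conclusion, not a proof. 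Note also that \cite{HS} treats a single scalar equation; the whole purpose of this lemma in the paper is to verify that a Hardt--Simon-type conclusion still holds for the coupled system, so that verification cannot be waved through. Finally, the stated conclusion is containment in a countable union of smooth $(n-2)$-dimensional submanifolds, which is sharper than the $(n-2)$-rectifiable cover your sketch would at best yield.

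The paper's proof is entirely different and much more elementary; you may want to compare. It avoids blow-ups. At a point $x$ of the critical zero set, let $q$ be the largest integer with $D^\alpha u_l(x)=0$ for all $|\alpha|\le q$ and all $l$, choose $\beta$ with $|\beta|=q-1$ and $\mathrm{Hess}(D^\beta u_{l_0})(x)\neq0$, and apply $D^\beta$ to the equation for $u_{l_0}$. Because every derivative of $u$ of order $\le q$ vanishes at $x$, all Leibniz terms involving derivatives of the coefficients $a_{ij}$, $b^l_j$, $c_l$ drop out, leaving the pointwise identity $a_{ij}(x)\,D_{ij}\bigl(D^\beta u_{l_0}\bigr)(x)=0$. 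Positive definiteness of $a_{ij}(x)$ then forces $\mathrm{rank}\,\mathrm{Hess}(D^\beta u_{l_0})(x)\ge 2$, so there are two derivatives $D_{i_1}D^\beta u_{l_0}$ and $D_{i_2}D^\beta u_{l_0}$ with linearly independent gradients at $x$; their common zero set is a smooth $(n-2)$-submanifold that locally contains the stratum $S_q$. Running over $q$ and over a countable cover of the critical zero set gives the lemma directly, with no compactness or iteration needed. If you want to salvage a blow-up argument, you would have to actually reproduce the Hardt--Simon iteration for the system; as written, the proposal does not prove the statement.
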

\begin{proof}
	We  define for each integer $q=1,2,\ldots$ the set
	\begin{equation}
	S_q=\{x:D^{\alpha}u_l(x)=0,\, \forall\, |\alpha|\leq q,\,\, \forall l\,\, \text{and}\,\, D^{q +1}u_{l_0}(x)\neq 0\,\, \text{for some}\,\,l_0 \}.
	\end{equation}
	We first note that if $x \in  u^{-1}(0)\cap|Du|^{-1}(0)$ and $r>0$ is small enough, then 
	\begin{eqnarray}\label{zero set}
	u^{-1}(0)\cap |Du|^{-1}(0)\cap B_r(x)
	= \cup_{q=1}^{d} S_q \cap B_r(x),
	\end{eqnarray}
	where $d-1$ is the order of vanishing of $u$ at $x$.
	Now for each $x \in S_q$ we consider a multi-index $\beta$ such that $|\beta|= q-1$ and  $\text{Hess}(D^{\beta}u_{l_0})(x)\neq 0$ for some $l_0$. Applying $D^{\beta}$ to both sides of (\ref{pde})  with $k=l_0$ and recalling that $D^{\alpha}u_{l}(x)=0$ for every multi-index $\alpha$ such that $|\alpha|\leq q$ we obtain
	\[a_{ij}(x)D_{ij}( D^{\beta}u_{l_0})(x)=0.\]
	Using that $a_{ij}$ is positive definite and that $\text{Hess}(D^{\beta}u_{l_0})(x)\neq 0$ we conclude that $\text{rank}(\text{Hess}(D^{\beta}u_{l_0})(x)\geq 2$. Thus there exist indexes $i_1$ and $i_2$ for which $\text{grad}(D_{i_1}D^{\beta}u_{l_0})(x)$ and $\text{grad}(D_{i_2}D^{\beta}u_{l_0})(x)$ are linearly independent. This implies that for small $r>0$ that
	\[B_r(x)\cap (D_{i_1}D^{\beta}u_{l_0})^{-1}(0)\cap (D_{i_2}D^{\beta}u_{l_0})^{-1}(0) \]
	is a $n-2$-dimensional submanifold $\Sigma_{x,r,\beta}$ which contains $B_r(x)\cap S_q$.
	In view of (\ref{zero set}) we conclude that for each $x\in u^{-1}(0)\cap |Du|^{-1}(0)$ there exist $r>0$ and smooth $n-2$-dimensional submanifolds $\Sigma_{x,r,q_1},\ldots, \Sigma_{r,x,q_s}$ for which
	\begin{eqnarray}\label{n-2}
	B_r(x)\cap u^{-1}(0)\cap|Du|^{-1}(0)\subset \cup_{j=1}^s \Sigma_{x,r,q_j}. 
	\end{eqnarray}
	The Lemma follows from (\ref{n-2}).
\end{proof}
\begin{lemma}\label{unique continuation}
	\textit{If $\Sigma_1$ and $\Sigma_2$ are $2$-dimensional minimal surfaces in $\mathbb{R}^n$ having a tangential intersection of infinite order at $x_0\in\Sigma_1\cap\Sigma_2$, then $\Sigma_1=\Sigma_2$.}
\end{lemma}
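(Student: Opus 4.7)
The plan is to reduce the statement to a unique continuation statement for an elliptic PDE system near $x_0$. Since $T_{x_0}\Sigma_1 = T_{x_0}\Sigma_2$, I would first pick coordinates in $\mathbb{R}^n$ in which this common tangent plane is the $x_1x_2$-plane and $x_0$ is the origin, so that on some neighborhood $U\subset \mathbb{R}^2$ of $0$ both surfaces are graphs of smooth maps $u,v:U\to \mathbb{R}^{n-2}$ with $u(0)=v(0)=0$ and $\nabla u(0)=\nabla v(0)=0$. The assumption that the intersection at $x_0$ is tangential of infinite order then translates into $D^{\alpha}(u-v)(0)=0$ for every multi-index $\alpha$.

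Next, by Lemma \ref{mean curvature equation} each component $u_l$ and $v_l$ satisfies the scalar minimal surface equation \eqref{mean curvature}, so by Lemma \ref{difference equation} the difference $\varphi=u-v$ satisfies the linear elliptic system
\[
\frac{a_{ij}(\nabla u)}{\sqrt{1+|\nabla u_l|^2}}D_{ij}\varphi_l+\sum_{m=1}^{n-2}b_j^m(\nabla u,\nabla v)D_j\varphi_m=0,\qquad l=1,\dots,n-2,
\]
with smooth coefficients, diagonal positive-definite principal part, no zeroth-order term, and coupling between components only at the level of first derivatives. With this system in hand I would apply a unique continuation theorem of Aronszajn type: any solution that vanishes to infinite order at an interior point must vanish identically on the connected component. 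Applied to $\varphi$, this forces $\varphi\equiv 0$ on a neighborhood of $0$, so $\Sigma_1$ and $\Sigma_2$ coincide in a neighborhood of $x_0$.

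To upgrade this local coincidence to global equality, I would consider the set
\[
E=\{x\in\Sigma_1 : \Sigma_1 \text{ and } \Sigma_2 \text{ coincide in some neighborhood of } x\},
\]
which is open by construction and contains $x_0$ by the previous step. The set $E$ is also relatively closed in $\Sigma_1$: if $x_k\in E$ converges to some $x\in\Sigma_1$, then $x\in \Sigma_2$, the two tangent planes at $x$ coincide, and every derivative of the local graph representations agrees at $x$, so the local argument applies again at $x$ and shows $x\in E$. Connectedness of $\Sigma_1$ then gives $E=\Sigma_1$, hence $\Sigma_1\subset \Sigma_2$; the symmetric argument yields $\Sigma_2\subset \Sigma_1$.

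The main obstacle is justifying the unique continuation step for the particular elliptic system produced by Lemma \ref{difference equation}. The cleanest route is to cite an Aronszajn-type result for elliptic systems with scalar positive-definite principal part and bounded first-order coupling, whose hypotheses are clearly met here. Alternatively, one can bypass it entirely by invoking the classical real-analyticity of minimal graphs: both $u$ and $v$, and hence $\varphi$, are real-analytic on $U$, so the vanishing of all derivatives of $\varphi$ at $0$ immediately forces $\varphi\equiv 0$ on the connected component of $0$ in $U$.
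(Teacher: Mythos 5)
Your proof is correct, but it takes a genuinely different route from the paper's. The paper exploits the two-dimensionality of $\Sigma_1,\Sigma_2$ to pass to a conformal parameterization: in isothermal coordinates each coordinate function of a minimal immersion is harmonic, so $\partial_z(v_1-v_2)$ is holomorphic, and a holomorphic function vanishing to infinite order at a point vanishes identically. This bypasses any appeal to unique continuation for elliptic systems. You instead write both surfaces as graphs over the common tangent plane, use Lemmas~\ref{mean curvature equation} and~\ref{difference equation} to obtain the linear elliptic system for $\varphi=u-v$, and then invoke Aronszajn-type unique continuation (or, more simply, the real-analyticity of minimal graphs). Your approach is more robust: it does not rely on the existence of a global conformal structure and would extend verbatim to $k$-dimensional minimal submanifolds, whereas the paper's holomorphicity argument is specific to dimension two. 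It also sidesteps a delicate point in the paper's proof, namely the assertion that one can choose a single coordinate chart that is simultaneously isothermal for both immersions. On the other hand, the paper's argument is more elementary, invoking nothing beyond the identity theorem for holomorphic functions. Your open-and-closed argument passing from local to global coincidence is a welcome addition; the paper states the global conclusion without spelling this step out. One small caveat: if you do go the Aronszajn route rather than the real-analyticity route, you should cite a version valid for systems with diagonal second-order part and first-order coupling (the scalar statement does not apply verbatim); the real-analyticity argument is cleaner precisely because it avoids this issue.
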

\begin{proof}
	Let  $v_k: \Omega \rightarrow \mathbb{R}^n$ be minimal map parameterizing a neighborhood of  $\Sigma_k$ for each $k=1,2$ and assume that $v_k(0)=x_0$.
	We can assume that the coordinates $z=x+y\, i$ in $\Omega$ are isothermal for both $v_1$ and $v_2$. As $v_k$ is minimal, each coordinate $v_k^i$, $i=1,\ldots, n$, is harmonic, which implies by the conformal invariance  of the Laplacian that
	$
	\partial_{\overline{z}}\partial_z v_k^i=0
	$. Hence, if we define $v(z)=v_1(z)- v_2(z)$, then each component of $\partial_{z} v$ is holomorphic, i.e., $\partial_{\overline{z}}\partial_zv^i=0$. Since $z=0$ is an infinite order zero of $v$, the analytic continuation property for holomorphic functions implies that $v\equiv 0$. Therefore, $\Sigma_1=\Sigma_2$.
\end{proof}

\section{Proof of  theorem}

\begin{theorem}\label{our result}
	\textit{Let $\Sigma^2$ be a compact free boundary minimal surface in $B^n$. Assume that for all points $x \in \Sigma$,
		\begin{equation}\label{pinching}
		|x^{\perp}|^2|A(x)|^2\leq 2,
		\end{equation}
		where $x^{\perp}$ denotes the normal component of $x$. Then
		\begin{enumerate}
			\item $|x^{\perp}|^2|A(x)|^2\equiv 0$ and $\Sigma^2$ is a flat equatorial disk.
			\item  $|x_0^{\perp}|^2|A(x_0)|^2=2$ at some point $x_0\in\Sigma^2$  and  $\Sigma^2$ is the critical catenoid inside a $3$-dimensional linear subspace.
	\end{enumerate}}
\end{theorem}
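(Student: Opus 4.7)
My plan is to mirror the structure of \cite{AN} and split the argument into two cases according to whether the pinching (\ref{pinching}) is strict everywhere on $\Sigma$ or saturated at some $x_0 \in \Sigma$.

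In the strict case, I would adapt the integral/Gauss--Bonnet argument of \cite{AN}, combining (\ref{pinching}) with the free boundary orthogonality to force $\Sigma$ to have the topology of a disk. Fraser--Schoen's uniqueness theorem for free boundary minimal disks in $B^n$ then identifies $\Sigma$ with the equatorial disk, giving $|A| \equiv 0$ and conclusion (1).

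In the equality case, the plan is to produce a $3$-dimensional linear subspace $V \subset \mathbb{R}^n$ with $\Sigma \subset V$; once this is in hand, Theorem \ref{an} applied inside $V \cap B^n \cong B^3$ identifies $\Sigma$ as the critical catenoid. To construct $V$, I would analyze the algebraic consequences of saturating (\ref{pinching}) at $x_0$: for a $2$-dimensional minimal surface this should force $A(x_0)$ to take values in a $1$-dimensional subspace of $N_{x_0}\Sigma$, spanned by some unit vector $\nu_0$, and I set $V = T_{x_0}\Sigma \oplus \mathrm{span}(\nu_0)$. Parametrizing $\Sigma$ near $x_0$ as a graph $u : U \subset T_{x_0}\Sigma \to N_{x_0}\Sigma \cong \mathbb{R}^{n-2}$, Lemma \ref{mean curvature equation} gives that each component of $u$ satisfies a quasilinear minimal-graph equation; comparing with the graph of the critical catenoid $\mathcal{C} \subset V$ tangent to $\Sigma$ at $x_0$ with matching second fundamental form, Lemma \ref{difference equation} then yields that the difference of the two graphs (whose $V^\perp$-components coincide with the $V^\perp$-components of $u$ itself) satisfies an elliptic system. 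By the construction of $V$, this difference vanishes together with its first and second derivatives at $x_0$. The plan is to upgrade this to infinite-order vanishing of the $V^\perp$-components: using Lemma \ref{size nodal set} (Hardt--Simon) to control the coincidence set at each finite order of vanishing, and differentiating (\ref{pinching}) against the elliptic system to rule out a finite-order stop, one concludes that $u$ takes values in $V$ to infinite order at $x_0$. Lemma \ref{unique continuation} then forces $\Sigma = \mathcal{C}$ in a neighborhood of $x_0$, hence $\Sigma \subset V$ globally by real-analyticity.

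I expect the main obstacle to be this last step: promoting the second-order vanishing of the $V^\perp$-components of $u$ to vanishing of all orders at $x_0$. The Hardt--Simon lemma gives control on the size of the nodal set at each finite order, and the elliptic system of Lemma \ref{difference equation} propagates the constraints imposed by (\ref{pinching}) to higher derivatives; but carrying out the induction so that any hypothetical finite-order stopping is incompatible with the algebraic saturation of the pinching is the delicate point, and it is precisely where the three ingredients advertised in the introduction---Fraser--Schoen uniqueness, dimension reduction, and nodal set analysis---must be combined.
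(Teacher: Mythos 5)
Your plan correctly identifies all three advertised ingredients, and your use of Lemma \ref{mean curvature equation}, Lemma \ref{difference equation}, Lemma \ref{size nodal set} and Lemma \ref{unique continuation} in the equality case is the right toolkit. But there is a genuine gap at exactly the point you flag as delicate, and it is not a technical step you can ``carry out by induction'': the approach as stated cannot close.

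You construct a model catenoid $\mathcal{C}$ in a $3$-plane $V$ tangent to $\Sigma$ at a \emph{single} saturation point $x_0$, with matching second fundamental form, so the graph difference $v$ vanishes to order $2$ at one point. You then hope Hardt--Simon will bootstrap this to infinite order. It will not. In the relevant coordinates the domain is $2$-dimensional, so Lemma \ref{size nodal set} says that, if $v$ vanishes to \emph{finite} order, the set $v^{-1}(0)\cap|Dv|^{-1}(0)$ is (locally) a countable union of $0$-dimensional submanifolds, i.e.\ $\mathcal{H}^1$-null. A single contact point is perfectly compatible with this conclusion, so there is no contradiction to extract, and ``differentiating (\ref{pinching}) against the elliptic system'' is a pointwise inequality at one point and does not generate higher-order identities. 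The paper avoids this by first proving, via Lemma \ref{hessiana positiva}, Lemma \ref{total convexity}, and a length-minimization argument in $\pi_1(\Sigma)$, that the minimum set $C(\Sigma)=\{f=\min f\}$ of $f=|x|^2/2$ is either a point (disk case) or a smooth simple closed geodesic (annulus case); in the latter case (\ref{pinching}) is saturated along all of $C(\Sigma)$, the Cauchy--Schwarz equality (\ref{quase umbilica}) holds there, the normal distribution $E$ is parallel along $C(\Sigma)$ by Lemma \ref{parallel lema}, and $\Sigma$ is tangent to a critical catenoid along the entire circle $C(\Sigma)$. That produces a $1$-dimensional contact set, which cannot be contained in an $\mathcal{H}^1$-null set, and this is the contradiction that upgrades $v$ to infinite-order vanishing before invoking Lemma \ref{unique continuation}. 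Without first establishing a $1$-dimensional saturation/contact set, your nodal-set step has nothing to contradict. (A secondary remark: the paper's dichotomy is topological, disk versus annulus, driven by the convexity of $\mathrm{Hess}_\Sigma f$; it does not use Gauss--Bonnet in the strict case, and the strict/saturated split you propose does not by itself give the disk topology without the same Hessian analysis.)
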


\begin{lemma}\label{hessian}
	\textit{Let $\Sigma^2$ be a free boundary minimal surface in $B^n$ and $f$ be the function  $f:\Sigma^2 \rightarrow \mathbb{R}$ defined by  $$f(x)=\frac{|x|^2}{2},\,\, x \in \Sigma^2.$$ Then
		$\nabla^{\Sigma} f= x^{\top}$ for every $x \in  \Sigma$ and
		\begin{eqnarray}\label{hessiana formula}
		\text{Hess}_{\Sigma}\,f(x)(X,Y)=\langle X,Y\rangle + \langle A(X,Y),\overrightarrow{x}\rangle.
		\end{eqnarray}}
\end{lemma}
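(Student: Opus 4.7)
The approach is to view $f$ as the restriction to $\Sigma$ of the ambient function $F:\mathbb{R}^n \to \mathbb{R}$ defined by $F(y) = |y|^2/2$, whose Euclidean gradient is the position vector $\overrightarrow{y}$ and whose Euclidean Hessian is the identity endomorphism. Both claims should then follow from the standard formulas comparing intrinsic and ambient derivatives of a restricted function, together with the Gauss formula for the second fundamental form.

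For the gradient statement, $\nabla^{\Sigma} f(x)$ is by definition the tangent vector at $x$ dual to $df(x)$ with respect to the induced metric. Since for every $V \in T_x\Sigma$ one has $df(x)(V) = dF(x)(V) = \langle \overrightarrow{x}, V\rangle = \langle x^{\top}, V\rangle$, the identity $\nabla^{\Sigma} f(x) = x^{\top}$ is immediate.

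For the Hessian I would use the Gauss formula $\nabla^{\mathbb{R}^n}_X Y = \nabla^{\Sigma}_X Y + A(X,Y)$ for tangent fields $X,Y$ on $\Sigma$, where $A(X,Y)$ lies in the normal bundle. Writing out
\[
\text{Hess}_{\Sigma} f(X,Y) = X(Y(f)) - (\nabla^{\Sigma}_X Y)(f) = X(Y(F)) - (\nabla^{\mathbb{R}^n}_X Y)(F) + A(X,Y)(F),
\]
the first two terms combine to $\text{Hess}_{\mathbb{R}^n} F(X,Y) = \langle X, Y\rangle$ because the Euclidean Hessian of $F$ is the identity, while the final term equals $\langle A(X,Y), \overrightarrow{x}\rangle$ since $\nabla^{\mathbb{R}^n} F = \overrightarrow{x}$. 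Adding these gives formula~(\ref{hessiana formula}).

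There is no serious obstacle in this argument: it is a routine pointwise computation that uses only the ambient Euclidean structure and the Gauss equation, and therefore holds at every $x \in \Sigma$, including boundary points. In particular, neither minimality nor the free boundary condition is actually invoked here; these hypotheses only play a role when the lemma is later applied in the proof of Theorem~\ref{our result}.
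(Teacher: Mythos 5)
Your proof is correct and follows essentially the same standard computation as the paper: both identify $\nabla^{\Sigma}f = x^{\top}$ from $df(V)=\langle \overrightarrow{x},V\rangle$, and both obtain the Hessian formula by invoking the Gauss formula $\overline{\nabla}_X Y = \nabla^{\Sigma}_X Y + A(X,Y)$. The only difference is bookkeeping -- you organize the calculation around the ambient Hessian of $F(y)=|y|^2/2$ being the identity, while the paper expands $\overline{\nabla}_X(x-x^{\perp})$ directly -- but the two are interchangeable one-line manipulations.
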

\begin{proof}
	Given $X\in \mathcal{X}(\Sigma)$, then
	\[
	X(f)= \frac{1}{2} X \langle \overrightarrow{x},\overrightarrow{x}\rangle= \langle X, \overrightarrow{x}\rangle= \langle X, x^{\top}\rangle.
	\]
	Hence,  $\nabla^{\Sigma} f(x)= x^{\top}$. The hessian of $f$ is then given by
	\begin{eqnarray*}
		Hess_{\Sigma}\,f\,(X,Y)&=& \langle \nabla_X\nabla f,Y\rangle= \langle \overline{\nabla}_X \nabla f,Y\rangle = \langle \overline{\nabla}_X (x- x^{\perp}),Y\rangle \\
		&=& \langle X,Y\rangle -\langle \overline{\nabla}_X x^{\perp},Y\rangle= \langle X,Y\rangle + \langle x^{\perp},\overline{\nabla}_XY\rangle \\
		&=& \langle X,Y\rangle + \langle A(X,Y),\overrightarrow{x}\rangle,
	\end{eqnarray*}
	where $X$ and $Y$ are vector fields in $\mathcal{X}(\Sigma)$.
\end{proof}
\begin{lemma}\label{hessiana positiva}
	If $|x^{\perp}|^2|A(x)|^2\leq 2$, then $\text{Hess}_{\Sigma}f\geq0$.
\end{lemma}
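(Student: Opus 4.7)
The plan is to interpret the Hessian formula from the previous lemma as a matrix inequality on a two-dimensional vector space and then exploit the minimality of $\Sigma$ together with a Cauchy--Schwarz estimate on the normal component.

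First I would observe that, since $A(X,Y)$ takes values in the normal bundle of $\Sigma$, the identity of Lemma \ref{hessian} can be rewritten as
\begin{equation*}
\text{Hess}_{\Sigma} f(X,Y) = \langle X, Y\rangle + \langle A(X,Y), x^{\perp}\rangle.
\end{equation*}
Introduce the symmetric endomorphism $B \colon T_x\Sigma \to T_x\Sigma$ determined by $\langle BX, Y\rangle = \langle A(X,Y), x^{\perp}\rangle$, so that $\text{Hess}_{\Sigma} f = I + B$ as quadratic forms on $T_x\Sigma$. Since $\Sigma$ is minimal, $\operatorname{tr} B = \langle \sum_i A(e_i,e_i),x^{\perp}\rangle = 2\langle \vec{H},x^{\perp}\rangle = 0$.

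Now comes the key geometric input: because $\Sigma$ is two-dimensional and $B$ is symmetric and traceless, its eigenvalues are $\pm\mu$ for some $\mu \geq 0$, hence $|B|^2 = 2\mu^2$. Writing $A$ and $x^{\perp}$ in an orthonormal normal frame $\{N_\alpha\}$ as $A(e_i,e_j) = \sum_\alpha h^\alpha_{ij} N_\alpha$ and $x^{\perp}=\sum_\alpha c_\alpha N_\alpha$, the entries of $B$ are $B_{ij} = \sum_\alpha c_\alpha h^\alpha_{ij}$, and a straightforward Cauchy--Schwarz in the $\alpha$ index yields $|B|^2 \leq |x^{\perp}|^2 |A|^2$.

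Combining these remarks with the pinching hypothesis $|x^{\perp}|^2|A|^2\leq 2$, we obtain $2\mu^2 = |B|^2 \leq 2$, i.e. $\mu \leq 1$. Consequently the eigenvalues of $I + B$ are $1 \pm \mu \geq 0$, which is exactly the statement $\text{Hess}_{\Sigma} f \geq 0$. There is no real obstacle here; the only point to handle carefully is the reduction to a traceless symmetric operator on a two-dimensional space, which is precisely where the assumption $\dim\Sigma = 2$ enters (as in the codimension-one argument of Ambrozio--Nunes, though now the Cauchy--Schwarz step is what absorbs the higher codimension).
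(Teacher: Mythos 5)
Your argument is correct and is essentially the same as the paper's, repackaged in slightly more invariant language. The paper diagonalizes $\text{Hess}_\Sigma f$ directly, writes the eigenvalues as $\overline{\lambda}_i = 1 + \langle A(e_i,e_i),\vec{x}\rangle$, and derives $2\overline{\lambda}_1\overline{\lambda}_2 = 4 - (\overline{\lambda}_1^2 + \overline{\lambda}_2^2) \geq 2 - |A|^2|x^\perp|^2 \geq 0$, then uses $\overline{\lambda}_1 + \overline{\lambda}_2 = 2$ (minimality) to conclude both are nonnegative; this is exactly your observation that $\text{Hess}_\Sigma f = I + B$ with $B$ traceless symmetric and $|B|^2 = 2\mu^2 \leq 2$. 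Your Cauchy--Schwarz is applied once to the full Frobenius norm $|B|^2 \leq |x^\perp|^2|A|^2$ in an orthonormal normal frame, whereas the paper applies Cauchy--Schwarz term by term to the diagonal entries $\langle A(e_i,e_i), x^\perp\rangle$ in the eigenbasis and then bounds $\sum_i |A(e_i,e_i)|^2 \leq |A|^2$; the two estimates agree and both exploit minimality and $\dim\Sigma = 2$ in the same way. Your formulation in terms of the traceless operator $B$ is arguably the cleaner way to display where the hypotheses enter, but there is no new idea relative to the paper.
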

\begin{proof}
	Let $\{e_1,e_2\}$ be an orthonormal base of $T\Sigma$ given by eigenvectors of $\text{Hess}_{\Sigma}f$. The respective eigenvalues are $\overline{\lambda}_i= 1 + \langle A(e_i,e_i),\overrightarrow{x}\rangle$. We want to prove that $\overline{\lambda}_i\geq 0$ for $i=1,2$.
	\begin{equation}\label{cauchy-schwarz}
	\overline{\lambda}_1^2 + \overline{\lambda}_2^2= 2+ \sum_{i=1}^{2}\left< A(e_i,e_i),\overrightarrow{x}\right>^2\leq 2 + \sum_{i=1}^{2}|A(e_i,e_i)|^2x^{\perp}|^2\leq 2 + |A|^2|x^{\perp}|^2,
	\end{equation}
where we used the Cauchy-Schwarz inequality in the first inequality.  Since$(\overline{\lambda}_1+ \overline{\lambda}_2)^2= 4$, we conclude that 
\[
2\,\overline{\lambda}_1\,\overline{\lambda}_2\geq2 - |A|^2\,|x^{\perp}|^2\geq 0.
\]
Hence, $\overline{\lambda}_1$ and $\overline{\lambda}_2$ have the same sign. As $\overline{\lambda}_1+ \overline{\lambda}_2= 2$, the lemma is proved.
\end{proof}
\begin{definition}
	Given a $2$-dimensional free boundary minimal surface $\Sigma^2$ in $B^n$ we define
	\begin{eqnarray}\label{C}
	C(\Sigma)= \{x\in \Sigma: f(x)\,=\,m_0\,:=\, \min_{\Sigma}\,f\}.
	\end{eqnarray}
\end{definition}

The conormal vector of a  free boundary minimal surface $\Sigma$ being normal to the boundary of the ball implies that $\partial\Sigma$ is convex on $\Sigma$. Using this fact and that $\text{Hess}_{\Sigma}f\geq 0$, we  obtain:

\begin{lemma}\label{total convexity}
	If $\Sigma$ is a free boundary minimal surface in $B^n$ satisfying $\text{Hess}_{\Sigma}f\geq 0$, then the set $C(\Sigma)$ is totally convex on $\Sigma$, meaning that every geodesic segment with extremities in $C(\Sigma)$ is in $C(\Sigma)$.
\end{lemma}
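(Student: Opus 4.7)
The plan is to combine Lemma \ref{hessian} with the hypothesis $\text{Hess}_\Sigma f \geq 0$ (Lemma \ref{hessiana positiva}) to show that $f$ is convex along every geodesic of $\Sigma$, and then conclude that a geodesic segment whose endpoints lie in the minimum level set $C(\Sigma)$ must be entirely contained in $C(\Sigma)$. This is the standard total convexity argument for a minimum set of a convex function.

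Concretely, I would let $\gamma:[0,L]\to \Sigma$ be a geodesic segment with $\gamma(0),\gamma(L)\in C(\Sigma)$, and define $g(t)=f(\gamma(t))$. Differentiating twice and using $\nabla^{\Sigma}_{\gamma'}\gamma'=0$, one obtains
\[
g''(t)=\text{Hess}_{\Sigma}f(\gamma'(t),\gamma'(t))\geq 0,
\]
so $g$ is convex on $[0,L]$. Since $g(0)=g(L)=m_0$, convexity gives the upper bound $g(t)\leq m_0$ on $[0,L]$; combined with the global lower bound $g(t)\geq m_0$ (as $m_0=\min_\Sigma f$), this forces $g\equiv m_0$, i.e.\ $\gamma([0,L])\subset C(\Sigma)$.

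The one point that needs care is that $\Sigma$ is a manifold with boundary, so one must know that the relevant geodesic segments between points of $\Sigma$ actually stay in $\Sigma$. This is precisely where the convexity of $\partial\Sigma$ in $\Sigma$, mentioned in the paragraph preceding the lemma, is invoked: the free boundary condition forces the outward conormal of $\Sigma$ to agree with the outward unit normal $\vec{x}/|x|$ of $\partial B^n$, which implies $\langle \overline{\nabla}_\tau \tau,\vec{x}\rangle=-1$ along $\partial\Sigma$ for a unit tangent $\tau$ and hence negative geodesic curvature with respect to the outward conormal. Geodesically convex boundary then ensures that geodesic segments in $\Sigma$ connecting points of $\Sigma$ do not escape through $\partial\Sigma$, so the convexity argument applies without modification. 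I do not expect a real obstacle here beyond bookkeeping of this boundary issue.
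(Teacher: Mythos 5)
Your proof is correct and follows exactly the (implicit) route the paper has in mind: the paper does not actually write out a proof of this lemma, only flagging that it follows from the two facts you use, namely $\text{Hess}_{\Sigma}f\geq0$ and the convexity of $\partial\Sigma$ in $\Sigma$ under the free boundary condition. The core computation $g''=\text{Hess}_\Sigma f(\gamma',\gamma')\geq 0$ together with $g(0)=g(L)=m_0=\min_\Sigma f$ forcing $g\equiv m_0$ is the intended argument, and your boundary discussion is consistent with what the paper signals (one can also note that $C(\Sigma)\cap\partial\Sigma=\emptyset$ since $f\equiv 1/2$ on $\partial\Sigma$ with outward derivative $\partial f/\partial\nu=1>0$).
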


Before  we start proving  Theorem \ref{our result}, let us recall a simple fact from Riemannian Geometry that we will use later. Let $c:[a,b]\rightarrow M$ be a curve in a Riemannian manifold $M$ and 
$P_s:T_{c(a)}M\rightarrow T_{c(s)}M$    the parallel transport map along $c$. Let $\Delta$ be a correspondence   which associates for each $s\in [a,b]$ a $j$-dimensional subspace $\Delta(s)\subset T_{c(s)}M$. The distribution $\Delta(s)$ is called parallel if $P_s(\Delta(a))= \Delta(s)$ for every $s \in [a,b]$.
\begin{lemma}[Spivak \cite{S}]\label{parallel lema}
	\textit{If  $\frac{DV}{ds}(s)\in \Delta(s)$ whenever $V$ is a vector field in $\Delta(s)$, then $\Delta(s)$ is parallel along $c$.}
\end{lemma}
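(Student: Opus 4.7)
The plan is to convert the hypothesis into a linear first-order ODE and use uniqueness of solutions to manufacture a parallel basis for $\Delta$. Concretely, I will show that for each $v \in \Delta(a)$ the parallel transport $P_s(v)$ stays inside $\Delta(s)$ for all $s \in [a,b]$; a dimension count against $\dim \Delta(a) = \dim \Delta(s) = j$ will then finish the proof.

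First I would choose a smooth frame $E_1(s),\ldots,E_j(s)$ for $\Delta(s)$ along $c$, using the implicit smoothness of $\Delta$ as a rank-$j$ subbundle of $c^{*}TM$. By hypothesis, applied to each frame field individually, there exist smooth functions $a_i^k(s)$ on $[a,b]$ with
$$
\frac{DE_i}{ds}(s) \;=\; \sum_{k=1}^{j} a_i^k(s)\, E_k(s).
$$
Next, for any smooth coefficients $\alpha_i(s)$, the candidate field $V(s) = \sum_i \alpha_i(s)\, E_i(s)$ satisfies, by the Leibniz rule,
$$
\frac{DV}{ds}(s) \;=\; \sum_{k=1}^{j} \Big( \alpha_k'(s) + \sum_{i=1}^{j} a_i^k(s)\, \alpha_i(s) \Big) E_k(s).
$$
Hence $V$ is parallel if and only if the coefficients solve the linear system $\alpha_k'(s) = -\sum_i a_i^k(s)\,\alpha_i(s)$, which admits a unique solution on all of $[a,b]$ for every prescribed initial vector $(\alpha_1(a),\ldots,\alpha_j(a))$. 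By construction, the resulting $V$ is parallel and is a section of $\Delta$ along $c$, since it is a linear combination of the $E_k$.

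Applying this to the $j$ initial conditions coming from a basis $v_1,\ldots,v_j$ of $\Delta(a)$ produces parallel vector fields $V_1,\ldots,V_j$ along $c$ with $V_\ell(a) = v_\ell$ and $V_\ell(s) \in \Delta(s)$ for every $s$. Uniqueness of parallel transport gives $V_\ell(s) = P_s(v_\ell)$, so $P_s(\Delta(a)) \subseteq \Delta(s)$; equality then follows from the dimension count. There is no substantive obstacle; the only point that needs care is that the smooth frame $\{E_i\}$ is a priori only local in $s$, which I would handle by covering $[a,b]$ with finitely many intervals on which such frames exist and patching the ODE solutions using uniqueness — a routine one-dimensional bootstrapping that does not interfere with the linear-algebra core of the argument.
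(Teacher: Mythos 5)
Your argument is correct: expressing an arbitrary section of $\Delta$ in a local frame $E_1,\ldots,E_j$, using the hypothesis to write $\frac{DE_i}{ds}=\sum_k a_i^k E_k$, and solving the resulting linear ODE system yields parallel fields spanning $\Delta$, so $P_s(\Delta(a))\subseteq\Delta(s)$ and equality follows by dimension; the patching of local frames via uniqueness of solutions is handled appropriately. The paper itself gives no proof (it cites Spivak), and your ODE/frame argument is precisely the standard one behind the cited result, so there is nothing to correct.
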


\begin{proof}[Proof of Theorem \ref{our result}]
	By Lemma \ref{hessiana positiva}, the inequality (\ref{pinching}) implies that $\text{Hess}_{\Sigma}\,f \geq 0$.
	Let us show  this implies that $\Sigma$ is diffeomorphic to either a disk or an annulus.

	If $\Sigma$ is simply connected, then $\Sigma$ is topologically a disk. Hence, we  assume that $\pi_1(\Sigma, x)\neq \{0\}$, where $x$ is chosen to lie in $C(\Sigma)$. By minimizing the length in a nontrivial homotopy class $[\alpha]\in \pi_1(\Sigma,x)$ among closed loops passing through the fixed point $x\in C(\Sigma)$,  we obtain a geodesic loop $\gamma: [0,1]\rightarrow \Sigma$, where $\gamma(0)=\gamma(1)=x$; this follows from the fact that  $\partial \Sigma$  is convex on $\Sigma$ due to the free boundary condition. We claim that $\gamma^{\prime}(0)=\gamma^{\prime}(1)$ and $C(\Sigma)=\gamma([0,1])$. If either one of those properties are not true, then the total convexity of $C(\Sigma)$ guarantees that an open set $U$ of $\Sigma$ is contained in $C(\Sigma)$. In this case, $\text{Hess}_{\Sigma}f\equiv 0$ over $U$. Hence, 
	\[\langle A(X,Y),\overrightarrow{x}\rangle = -\,\langle X,Y\rangle.
	\]
Since $\overrightarrow{x}$ is a constant length normal vector  to $\Sigma$ along $U$, we conclude that the mean curvature of $\Sigma$ in the direction of $\overrightarrow{x}$ is non-zero, a contradiction. 
Therefore, $C(\Sigma)$ is a smooth simple closed geodesic. Note that this implies that $\pi_1(\Sigma)$ is cyclic, from this we obtain that  $\Sigma$ is an annulus. 
	
	If $\Sigma^2$ is a minimal disk, then Fraser and Schoen's theorem in \cite{FS} implies that $\Sigma^2$ is an equatorial disk and   $|x^{\perp}|^2\,|A(x)|^2\equiv 0$.

	If $\Sigma^2$ is an annulus, then $C(\Sigma)$ is a  smooth simple closed geodesic. This implies that   $\overline{\lambda}=0$ is an eigenvalue of $\text{Hess}_{\Sigma}\,f(x_0)$ for every $x_0\in C(\Sigma)$ and $(\overline{\lambda}_1^2+ \overline{\lambda}_2^2)=(\overline{\lambda}_1+\overline{\lambda}_2)^2$. On the other hand, 
	\[
	\sum_{i=1} \overline{\lambda}_i^2= 2 + \langle A(e_i,e_i),\overrightarrow{x}\rangle^2\leq  2+ |x^{\perp}|^2\,|A(x)|^2\leq 4= \bigg(\sum_{i=1}\overline{\lambda}_i\bigg)^2.
	\] 
Hence, $\langle \sum_{i=1}^{2}A(e_i,e_i),\overrightarrow{x}\rangle^2= |A(x)|^2|x^{\perp}|^2=2$ for every $x\in C(\Sigma)$. It follows from   the Cauchy-Schwarz inequality  that
	\begin{eqnarray}\label{quase umbilica}
	A(e_i,e_i)=\langle A(e_i,e_i),\frac{\overrightarrow{x}}{|x|}\rangle \frac{\overrightarrow{x}}{|x|}.
	\end{eqnarray}
	Consequently, if $e_1$ is tangent to $C(\Sigma)$, then 
	\[
	\overline{\nabla}_{e_1}e_1=\langle A(e_i,e_i),\frac{\overrightarrow{x}}{|x|}\rangle \frac{\overrightarrow{x}}{|x|},
	\]
	since $C(\Sigma)$ is a geodesic on $\Sigma$. Thus,
	$C(\Sigma)$ is also a geodesic in $\partial B_{2\,m_0}^{n+1}(0)$, i.e., a round circle.
	Now  we consider the  normal distribution $E$ along $C(\Sigma)$ defined by \[E=\{\xi: \xi\in \mathcal{X}^{\perp}(\Sigma)|_{C(\Sigma)} \quad \text{and}\quad \langle \xi,\overrightarrow{x}\rangle=0\}.\]
	It follows from (\ref{quase umbilica}) that  for every $\xi \in E$ the following is true:
	\[\overline{\nabla}_{\gamma'(t)}\xi \in E.\]
	Lemma \ref{parallel lema} implies that  the distribution $E$ is parallel along $C(\Sigma)$. Hence, $E$ is a constant $(n-2)$-dimensional plane throught the origin.
	Therefore, there exists a critical catenoid $\Sigma_c$  which is tangent to $\Sigma$ along $C(\Sigma)$. Near $x_0\in C(\Sigma)$ we write $\Sigma$ and $\Sigma_c$ locally as a graph over $T_{x_0}\Sigma$. Hence, $\Sigma_c=\text{graph}(f_c)$ and
	\[ \text{div}\bigg(\frac{\nabla f_c}{\sqrt{1+ |\nabla f_c}|^2}\bigg)=0.\]
	Similarly, $\Sigma=\text{graph}(u)$, where $u:\mathbb{R}^2=T_{x_0}\Sigma \rightarrow \mathbb{R}^{n-1}$, and by Lemma \ref{mean curvature equation} 
	\[
	\frac{a_{ij}(\nabla u_1,\ldots,\nabla u_{n-1})}{\sqrt{1+ |\nabla u_i}|^2}D_{ij} u_i=0.
	\]
	for every $i \in {1,\ldots, n-1}$.
	Lemma \ref{difference equation} implies that the difference $v=u-f_c$ satisfies a linear PDE of the following form:
	\[
	\frac{a_{ij}(\nabla u)}{\sqrt{1+ |\nabla u_k|^2}} D_{ij} v_k + \sum_{l=1}^{n-1}b_j^l(\nabla u,\nabla f_c) D_j v_l=0,
	\]
	for each $k=1,\ldots, n-1$. Note that  $v$ vanishes on $x_0$ and the order of vanishing is finite by Lemma \ref{unique continuation}.
	Therefore, $\mathcal{H}^1(v^{-1}(0)\cap |\nabla v|^{-1}(0)= 0$ by Lemma \ref{size nodal set}.
	This is a contradiction since $\Sigma$ and $\Sigma_c$ are tangent along $C(\Sigma)$ and $\text{dim}\,C(\Sigma)=1$. We conclude that  $v\equiv 0$ near $x_0$ and the theorem  follows from  standard analytic continuation property for minimal surfaces. 
\end{proof}

\begin{remark}\label{remark}
The same proof works for $2$-dimensional minimal surfaces   properly embedded in $\mathbb{R}^n$; the conclusion in this case is that such a surface $\Sigma^2$ satisfying (\ref{gap})  is either simply connected or the catenoid. In the special case $n=3$,  we can invoke the classification of properly embedded simply connected minimal surfaces by Meeks and Rosenberg \cite{MR} to conclude that $\Sigma$ is either the plane, the catenoid, or the helicoid. A simple computation shows that the helicoid does not satisfy (\ref{gap}).
\end{remark}

\end{document}